\newtheorem*{rep@theorem}{\rep@title}
\newcommand{\newreptheorem}[2]{% Thanks to Andrew Stacey from
\newenvironment{rep#1}[1]{% TeX StackExchange for this code
 \def\rep@title{#2 \ref{##1}}%
 \begin{rep@theorem}}%
 {\end{rep@theorem}}}
\newtheorem{thm}{Theorem}[section]
\newtheorem{lem}[thm]{Lemma}
\newtheorem{prop}[thm]{Proposition}
\newtheorem{cor}[thm]{Corollary}
\newtheorem{fact}[thm]{Fact}
\newtheorem{defn}{Definition}[section]
\newcommand{\kO}{\mathcal{O}}
\newcommand{\ot}{\operatorname{ot}}
\newcommand{\ls}{<_{\kO^*}}
\newcommand{\Nat}{\mathbb{N}}
\begin{document}

\title{Topological conjugations are not constructable}
\author{Linda Brown Westrick}
\email{westrick@math.berkeley.edu}
%\subjclass[2010]{Primary 03D80; Secondary 37E05}

\begin{abstract}
We construct two computable topologically conjugate functions for
which no conjugacy is computable, or even hyperarithmetic, resolving
an open question of Kennedy and Stockman \cite{openproblems}.
\end{abstract}

\maketitle

\section*{Author's Note}
After this was written up I learned that most of the ideas below are
effectivizations of folk theorems which were known to descriptive set
theorists.  Hjorth has given a nice account of these in section 4.2 of
his book \emph{Classification and orbit equivalence
relations} \cite{hjorth}.

The non-logician who wants to understand the basic idea could still use
Sections \ref{Preliminaries} and \ref{Noncomputable} as an accessible expository construction of
a pair of computable topologically conjugate functions with no
computable conjugacy.

If one already has some familiarity with computable analysis and
is interested in the hyperarithmetic aspect, I would recommend reading
Hjorth's book instead, because the presentation there suggests a
cleaner way to encode linear orders into homeomorphisms.

\hfill  \emph{Linda Brown Westrick, June 6, 2013}

\section{Introduction}
Let $I$ denote the closed unit interval.  Two continuous functions
$f,g: I \rightarrow I$ are \emph{topologically conjugate} if there is
a homeomorphism $h$ of $I$ such that $f\circ h = h\circ g$.  The
function $h$ is called a \emph{topological conjugation} or a
\emph{conjugacy}.  In \cite[pg. 298]{openproblems} Ingram records the following
question of Kennedy and Stockman: Given $f$ and $g$ which are
topologically conjugate, how can one construct a conjugacy?  If
constructing a conjugacy $h$ means providing an algorithm which
computes arbitrarily good approximations to $h$, we show that in
general there is no such construction.
\begin{repprop}{noncomp}
  There are two computable topologically conjugate functions with no
  computable conjugacy.
\end{repprop}

It is possible to force conjugacies to be much less constructable than
merely noncomputable.  The hyperarithmetic functions, a superset of
the computable functions, include any function that can be
``constructed'' as the result of a transfinite computation of
countable ordinal length, where the order type of the computation
length must be computable as a linear order
 in the sense of Section \ref{2w}.  For an
introduction to the hyperarithmetic hierarchy we refer the reader to
\cite{sacks}.

\begin{repprop}{nonhyp}
There are two computable topologically conjugate functions with no
  hyperarithmetic conjugacy.
\end{repprop}

Both constructions rely centrally on the fact that any topological
conjugacy of $f$ and $g$ must include an order isomorphism from the
fixed points of $f$ to the fixed points of $g$.  We use pairs of
computable linear orderings without any computable order isomorphism to
specify the fixed points of $f$ and $g$.

In Section \ref{Preliminaries} we define the needed notions from
computability theory.  In Section \ref{Noncomputable} we construct two
computable topologically conjugate functions with no computable
conjugacy.  In Section \ref{Nonhyperarithmetic}, we build on the
methods of Section \ref{Noncomputable} to construct two computable
topologically conjugate functions with no hyperarithmetic conjugacy.

The author would like to thank Theodore Slaman and Antonio Montalb\'an
for useful conversations about this topic.

\section{Preliminaries}\label{Preliminaries}

We cover the notation and basic computability concepts for infinite
binary sequences in Section \ref{2w}, trees in Section \ref{trees},
and real-valued functions in Section \ref{real-valued functions}.
This section contains all the background needed for Sections
\ref{Noncomputable} and for the construction in Section \ref{construction}.

\subsection{Computability in Cantor space}\label{2w}
\emph{Cantor space}, denoted $2^\omega$, is the set of all infinite sequences
of 0's and 1's.  If $X\in 2^\omega$, then $X(n)$ refers to the $n$th
element of $X$.  For $X,Y\in 2^\omega$, we say that $X<Y$ if $X\neq Y$
and $X(n) < Y(n)$ where $n$ is the first place where they differ.
Elements of Cantor space are also identified in the natural way with
subsets of the natural numbers.

A set $X\in 2^\omega$ is \emph{computable} if there is an algorithm
which, on input $n$, outputs $X(n)$.  Formally, algorithms are
represented as \emph{Turing machines}.  A Turing machine accepts
natural numbers as inputs.  On a given input, a Turing machine may
output a natural number, or it may run forever.  If $\Gamma$ is a
Turing machine, then $\Gamma(n)$ denotes its output on input $n$, if this output
exists.  For more details about Turing machines we refer the reader to
\cite{soare}.

A function $f:\mathbb{N}\rightarrow\mathbb{N}$ is called computable if
there is a Turing machine $\Gamma$ such that $\Gamma(n) = f(n)$ for
all $n$.

A set $X$ is called \emph{computably enumerable} if $X$
is empty or if there is a Turing machine $\Gamma$ which halts on all
its inputs such that $X = \{ \Gamma(n) : n \in \mathbb{N}\}$.  In this
case the sequence $\Gamma(0), \Gamma(1), \Gamma(2), \dots$ is called
an \emph{enumeration} of $X$.  We will use the following fact:

\begin{fact} There is a computably enumerable set $A$ which is not computable.
\end{fact}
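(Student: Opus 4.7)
The plan is to prove this standard fact via the halting problem and a diagonalization argument. First I would fix an effective enumeration $\Gamma_0, \Gamma_1, \Gamma_2, \ldots$ of all Turing machines, using the fact that each machine has a finite description codable as a natural number. Then I would define
\[
  A = \{e \in \mathbb{N} : \Gamma_e(e) \text{ halts}\},
\]
the set of indices of machines that halt on their own index.

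To verify $A$ is computably enumerable, I would describe a standard dovetailing procedure: at stage $s$, simulate each of $\Gamma_0(0), \Gamma_1(1), \ldots, \Gamma_s(s)$ for $s$ steps, enumerating $e$ into $A$ whenever $\Gamma_e(e)$ is observed to halt within the allotted time. This yields an everywhere-halting Turing machine whose range is precisely $A$, fulfilling the definition of computable enumerability (and $A$ is nonempty since many trivial machines halt on every input).

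The heart of the argument is showing $A$ is not computable, which I would do by diagonalization. Suppose for contradiction that $A$ were computable by some Turing machine $\Gamma$. I would define a new machine $\Gamma'$ which on input $n$ first computes $\Gamma(n)$, then loops forever if $\Gamma(n) = 1$ (i.e., $n \in A$) and halts otherwise. Letting $e'$ be an index of $\Gamma'$, the definition of $A$ gives $e' \in A$ iff $\Gamma_{e'}(e') = \Gamma'(e')$ halts iff $e' \notin A$, a contradiction. There is no genuine obstacle here; the only care needed is in the Gödel-style enumeration of machines and in recognizing that the machine $\Gamma'$ constructed from $\Gamma$ indeed has some index, both of which are standard computability infrastructure covered in \cite{soare}.
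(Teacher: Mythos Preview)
Your proof is correct and is the standard diagonalization argument via the halting problem. The paper itself does not supply a proof of this fact at all; it is simply asserted as background, with the reader referred to \cite{soare} for details. So there is nothing to compare against, and your write-up would serve as a perfectly good proof if one were wanted.
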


An \emph{oracle Turing machine} is a Turing machine which is permitted
to access arbitrary bits of a set called the \emph{oracle} as a part
of its computation.  The oracle is an element of $2^\omega$.  The
output of an Turing machine $\Gamma$ with oracle $X$ on input $n$ is
denoted $\Gamma^X(n)$.

If $X,Y\in 2^\omega$, we say $X$ \emph{computes} $Y$ if there is an
oracle Turing machine $\Gamma$ such that $\Gamma^X(n) = Y(n)$ for all
$n$.  If $X$ computes $Y$ and $X$ is computable, then $Y$ is also
computable.

A function $f:2^\omega \rightarrow 2^\omega$ is called computable if
there is an oracle Turing machine $\Gamma$ such that for all $X$ and
$n$, $\Gamma^X(n) = f(X)(n)$.  In this case, we say that $f(X)$ is
\emph{uniformly computable} from $X$.

There is a bijective \emph{pairing function} $\langle \cdot,\cdot
\rangle: \mathbb{N}\times\mathbb{N} \rightarrow \mathbb{N}$, whose
inverse is computable in the sense that the maps $\langle n, m \rangle
\mapsto n$ and $\langle n, m \rangle\mapsto m$ are computable.  This
function is useful for encoding information into subsets of
$\mathbb{N}$.  It is also useful for combining the information from
multiple $X\in 2^\omega$ in an orderly way.  Given
$\{X_n\}_{n\in\mathbb{N}}$ such that each $X_n\in 2^\omega$, we write
$\bigoplus_n X_n$ to denote the set $Y\in 2^\omega$ such that
$Y(\langle n, m\rangle) = 1$ if and only if $m \in X_n$.

Functions on natural numbers can be encoded into subsets of $\Nat$ as
follows.  The code for a function $f:\mathbb{N}\rightarrow\mathbb{N}$
is the set \{$\langle n, f(n) \rangle : n\in\Nat\}$.  This $f$ is
computable (computability for functions from $\Nat$ to $\Nat$ is
defined above) if and only if its code is a computable subset of
$\Nat$.

Relations on natural numbers can also be encoded into subsets of
$\Nat$.  In Section \ref{Nonhyperarithmetic} we consider linear orders
on subsets of the natural numbers.  A linear order $(A,<_R)$ can be
encoded as $\{\langle a,b \rangle : a,b\in A \text{ and } a<_R b\}$.
A linear order is \emph{computable} if and only if its code is
computable.

For more information about computability theory, we refer the reader
to \cite{soare}.

\subsection{Trees and Finite Binary Strings}\label{trees}
We write $2^{<\omega}$ for the set of all finite binary strings.  A
tree is a subset of $2^{<\omega}$ which is closed under taking initial
segments.  
The empty string is denoted $\emptyset$.  If $\sigma\in2^{<\omega}$,
we write $|\sigma|$ for the length of $\sigma$, and $\sigma(n)$ for
the $n$th element of $\sigma$, where $n$ starts at $0$.  If $\sigma,\tau \in 2^{<\omega}$, we
write $\sigma \subseteq \tau$ if $|\sigma|\leq |\tau|$ and for each
$n<|\sigma|$, $\sigma(n) = \tau(n)$.  We write
$\sigma^\smallfrown\tau$ to denote the concatenation of $\sigma$ and
$\tau$.  By $\sigma^\smallfrown\overline{1}$ (respectively
$\sigma^\smallfrown\overline{0}$) we mean the real $X\in 2^\omega$
such that $X(n) = \sigma(n)$ if $n<|\sigma|$ and $X(n) = 1$
(respectively $X(n)=0$) otherwise.

Trees can be encoded into subsets of $\Nat$ as follows.  There is a bijective
correspondence between $2^{<\omega}$ and $\Nat$ which causes the basic
functions and relations on finite strings defined above to be
computable.  The code for a tree is the subset of $\Nat$ consisting of
the codes for each of its finite strings.  A tree is called
\emph{computable} if its code is computable.

If $X\in 2^\omega$, $X\upharpoonright n$ denotes the finite binary
string which is the first $n$ bits of $X$.  If $T$ is a tree, then
$[T]\subseteq 2^\omega$ is the set of all $X$ such that
$X\upharpoonright n \in T$ for all $n$.  The set $[T]$ is also called
the path set of $T$ and its elements are called paths.

\subsection{Computability for Reals and Real-Valued Functions}\label{real-valued functions}

A real number in $[0,1]$ is computable if and only if its binary
decimal expansion is computable.  However, we do not use binary
expansions to represent real numbers because these expansions behave
badly near dyadic rationals.  (Knowing that $x \in (.5-\varepsilon,
.5+\varepsilon)$ gives us no information about the first digit of
$x$'s binary decimal expansion, no matter how small $\varepsilon$ is.)

The rational numbers can be encoded into the natural numbers as
follows.  A code for a rational number $q$ is a natural number
$m=\langle s, \langle a, b \rangle \rangle$ such that $q =
(-1)^s\frac{a}{b}$.

A code for a real number $r$ is meant to encode a Cauchy sequence of
rationals $\{q_n\}_{n\in\mathbb{N}}$ converging to $r$.  Formally, a
\emph{code} for $r$ is a set $X \in 2^\omega$ such that 
\begin{enumerate}
\item for each
$n\in\mathbb{N}$, there is exactly one $m$ such that $\langle n, m
\rangle \in X$
\item this $m$ is a code for a rational number
$q$ such that $|q-r|<2^{-n}$.
\end{enumerate} Note that there are many different codes for each
real.  A real is \emph{computable} if it has a computable code.  This
means that a real $r$ is computable if and only if there is an
algorithm which, on input $n$, returns a rational approximation to $r$
that is accurate to within $2^{-n}$.

A code for a continuous function $f:I\rightarrow I$ is meant to
specify $f$ on a dense subset of $I$ and provide a modulus of uniform
continuity.  Let $q_1,q_2,\dots$ be a computable enumeration of codes
for all the rational numbers in $I$.  A code for a continuous function
$f:I \rightarrow I$ is a set $X = \bigoplus_n X_n$ such that
\begin{enumerate}
\item for each $i>0$, $X_i$ is a code for $f(q_i)$
\item $X_0$ is a code for a function $d:\Nat\rightarrow\Nat$ such
  that $|f(x)-f(y)|<2^{-m}$ whenever $|x-y|<2^{-d(m)}$.
\end{enumerate}
A continuous function $f:I \rightarrow I$ is \emph{computable} if and only
if it has a computable code.  Finite sums and products of computable
functions are computable.

For more information about computable analysis we refer the reader to
\cite{pour-el}.

\section{Noncomputable Conjugation}\label{Noncomputable}

In this section we construct two computable functions
$f,g:[0,1]\rightarrow[0,1]$ such that $f$ and $g$ are topologically
conjugate but not computably topologically conjugate.  Both $f$ and
$g$ are homeomorphisms of $[0,1]$ satisfying $f(x)\geq x$ and
$g(x)\geq x$ for all $x$.  Each of them has a set of fixed points
corresponding to the set of paths through a computable tree.  The
trees' path sets are order isomorphic, but have no computable order
isomorphism.  Since any conjugacy induces an order isomorphism, no
conjugacy can be computable.

First we specify the locations where it is possible for our functions
to have a fixed point.  The following Cantor set differs from the
canonical one in that it has been shrunk to fit in the interval
$[\frac{1}{3}, \frac{2}{3}]$.
\begin{defn}
Let $\mathcal{C}\subset[0,1]$ denote the set of $x$ whose ternary decimal expansion begins with $1$ and continues using only $0$ and $2$.  If $X\in 2^\omega$, let $c(X)$ denote the unique $x\in\mathcal{C}$ whose first ternary digit is $1$ and whose $n+1$st ternary digit is $2X(n)$.
\end{defn}

Observe that $c$ is an order isomorphism between $\mathcal{C}$ and $2^\omega$.  Also, considering $c$ as a function from $2^\omega$ to $\{X\in 2^\omega : X \text{ is a code for some } x \in \mathcal{C}\}$, both $c$ and $c^{-1}$ are computable.

\begin{lem}\label{functionconstruction}
  From any (code for a) computable tree $T\subseteq 2^{<\omega}$, one
  may uniformly compute a (code for a) computable
  $f_T:[0,1]\rightarrow [0,1]$ satisfying
\begin{enumerate}
\item $f_T$ continuous and increasing with $f_T(x) \geq x$ for all $x$
\item $f_T(x) = x$ if and only if $x \in \{0,1\}$ or $x\in\mathcal{C}$
  and $c^{-1}(x) \in [T]$.
\end{enumerate}
\end{lem}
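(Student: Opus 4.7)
The plan is to take $f_T(x) = x + \tfrac{1}{4}\, h_T(x)$, where $h_T:[0,1]\to[0,\infty)$ is constructed to be continuous, Lipschitz, and have zero set exactly $\{0,1\}\cup c([T])$; the factor $\tfrac{1}{4}$ will be small enough to force $f_T$ to be strictly increasing. The core idea is to express $c([T])$ as the intersection of a computably decreasing sequence of closed sets and then to define $h_T$ as a geometrically weighted sum of distance functions to those approximants.

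Concretely, for each $s\in\Nat$ I would let $T_s = \{\sigma\in T : |\sigma|=s\}$ (a finite set, uniformly computable from $T$) and define
\[
C_s = \{0,1\}\cup\bigcup_{\sigma\in T_s} I_\sigma,\qquad I_\sigma = [c(\sigma^\smallfrown\overline{0}),\,c(\sigma^\smallfrown\overline{1})].
\]
The endpoints of $I_\sigma$ are rationals with finite ternary expansions computable from $\sigma$, so $C_s$ is a finite union of rational-endpoint intervals, computable uniformly in $s$. The function $\phi_s(x) = d(x,C_s)$ is then $1$-Lipschitz, computable in $s$, and satisfies $\phi_s^{-1}(0) = C_s$. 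Then set
\[
h_T(x) = \sum_{s\geq 0} 2^{-s}\phi_s(x),\qquad f_T(x) = x + \tfrac{1}{4}\, h_T(x).
\]

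For verification: the series converges uniformly (each term is bounded by $2^{-s}$), so $h_T$ is continuous and $2$-Lipschitz, which gives $f_T(y)-f_T(x)\geq (y-x) - \tfrac{1}{2}(y-x) = \tfrac{1}{2}(y-x)$ for $x<y$ and hence strict monotonicity; $f_T$ is also $\tfrac{3}{2}$-Lipschitz, and the fact that $\phi_s(0) = \phi_s(1) = 0$ yields $f_T(0)=0, f_T(1)=1$, so $f_T$ maps $[0,1]$ into itself. The fixed-point condition reduces to $h_T(x) = 0$, which is equivalent to $x\in\bigcap_s C_s$; applying K\"onig's lemma to the binary sub-tree $\{\sigma\in T : x\in I_\sigma\}$ shows $\bigcap_s C_s = \{0,1\}\cup c([T])$, which is exactly what is required. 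Producing a code for $f_T$ from a code for $T$ is then routine: to approximate $f_T(q)$ to within $2^{-m}$, truncate at $s_0 = m+2$, compute each $\phi_s(q)$ for $s<s_0$ as a minimum of finitely many rational distances, and read off a modulus of continuity from the Lipschitz constant $\tfrac{3}{2}$.

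The main obstacle I expect is the mismatch between the complexity of $c([T])$ and the computability requirement: $[T]$ is in general a non-computable $\Pi^0_1$ class, yet the zero set of the computable function $f_T - \mathrm{id}$ must equal $\{0,1\}\cup c([T])$ exactly. The distance-to-approximation device handles this by realizing $c([T])$ as the countable intersection of the effective decreasing sets $C_s$ and then taking a geometrically weighted sum of the $\phi_s$; the geometric decay is what turns what would otherwise be merely lower semicomputable into a fully computable function, and the $1$-Lipschitz bound on each $\phi_s$ is what keeps the weighted sum under good enough control that a small multiple can be added to the identity without destroying monotonicity.
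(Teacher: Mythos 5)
Your proposal is correct and is essentially the paper's construction: both write $c([T])\cup\{0,1\}$ as the decreasing intersection of the level-$s$ interval covers $C_s=\{0,1\}\cup\bigcup_{\sigma\in T_s}I_\sigma$ and set $f_T$ equal to the identity plus a geometrically weighted sum of nonnegative computable functions whose zero sets are exactly the $C_s$, with the geometric decay simultaneously giving uniform convergence (hence computability and the correct fixed-point set, via K\"onig's lemma exactly as you say) and preserving strict monotonicity. The only difference is cosmetic: you use the $1$-Lipschitz distance functions $d(\cdot,C_s)$ and control monotonicity by Lipschitz estimates, while the paper uses smooth bump functions supported on the complementary intervals and controls monotonicity by bounding the derivative; both yield the same $\tfrac32$-Lipschitz bound for the modulus of continuity.
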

\begin{proof}
  Let $b:[0,1]\rightarrow [0,1]$ be a smooth bump function which
  satisfies $b(0)=b(1)=b'(0)=b'(1)=0$ and for all $x$, $b'(x)<1$.  For
  example, we could have $b(x) = \frac{1}{K}e^{-\frac{1}{x(1-x)}}$ where
  $K$ is a constant chosen large enough to guarantee that for all $x$,
  $b'(x)<1$.

  Given $T$, we build a sequence of functions $h_n$ as follows.  Let
  $C_n$ denote the set of all $\sigma \in T$ such that $|\sigma|=n$.
  To each such $\sigma$, associate the closed interval $I_\sigma =
  [c(\sigma^\smallfrown \overline{0}),
  c(\sigma^\smallfrown\overline{1})]$.  Let $U_n = (0,1)\setminus
  \bigcup_{\sigma \in C_n} I_\sigma$.  Then $U_n$ is an open set
  consisting of finitely many connected components
  $(a_1,c_1),\dots,(a_r,c_r)$.  Let $$h_n(x) = \sum_{i=1}^r
  \frac{1}{3^{n}}b[a_i,c_i](x),$$ where $$b[a,c](x) =
  \left\{\begin{array}{ll} (c-a)b(\frac{x-a}{c-a}) & \text{ if
      } x \in [a,c] \\ 0 & \text{ otherwise,} \end{array}\right.$$
that is, $b[a,c]$ is the function $b$ scaled proportionally so that it is supported on $[a,c]$.  Then define $$f_T = x + \sum_{n=1}^\infty h_n(x).$$

The function $f_T$ is continuous because it is the uniform limit of
continuous functions.  Note also that each $h_n$ is differentiable
with $|h_n'(x)|<3^{-n}$ for each $n$ (the bumps that comprise $h_n$
have disjoint support).  Therefore, the sum $1+\sum_{n=1}^\infty
h_n'(x)$ also converges uniformly to a function $g$ satisfying
$\frac{1}{2}< g(x) < \frac{3}{2}$ for all $x$.  So $f_T$ is
differentiable with $f_T'(x) = g(x) > 0$ for all $x$, so $f_T$ is
increasing.  And because $h_n(x)\geq 0$ for all $x$, $f_T(x)\geq x$
for all $x$.  It is clear that $f_T(0) = 0$ and $f_T(1) = 1$.  And for
$x\in (0,1)$, $x\in \cup_n U_n$ if and only if $x\notin c([T])$, so
$f(x) = x$ for $x\in c([T])$ and $f(x)>x$ otherwise.

Finally, $f_T$ is computable because the following approximation
holds: $$\left|f_T(x) - \left(x + \sum_{n=1}^N h_n(x)\right)\right| <
\sum_{n=N+1}^\infty 3^{-n} = \frac{1}{2\cdot 3^N}.$$ This can be used
to compute $f_T(q)$ to any precision for any rational $q$.
Furthermore, $|f_T'(x)|<\frac{3}{2}$ for all $x$, so $|f_T(x)-f_T(y)|
< \frac{3}{2}|x-y|$, which gives a computable modulus of continuity.
\end{proof}

\begin{lem}\label{conjugacy}
 Let $P,Q\subseteq 2^{<\omega}$. The following are equivalent:
\begin{enumerate}
\item $f_P$ and $f_Q$ are topologically conjugate.
\item $\ot([P]) = \ot([Q])$
\end{enumerate}
Furthermore, if $h$ is a homeomorphism of $[0,1]$ such that $h\circ f_P
 = f_Q \circ h$, then $h$ computes an order isomorphism
$h^*:[P]\rightarrow[Q]$.
\end{lem}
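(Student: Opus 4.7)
The plan is to address the two implications and the ``Furthermore'' clause in a unified way, since the ``Furthermore'' essentially proves $(1)\Rightarrow(2)$ with extra uniformity. The key geometric fact driving everything is that the fixed-point set of $f_T$ is $\{0,1\}\cup c([T])$, and a conjugacy must map fixed points to fixed points in an order-preserving way.

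\textbf{Forward direction and computability.} Suppose $h$ is a homeomorphism of $[0,1]$ satisfying $h\circ f_P = f_Q\circ h$. Being a homeomorphism of an interval, $h$ is monotone; I first rule out the decreasing case. If $h$ were decreasing, then $f_P(x)\geq x$ would give $h(f_P(x))\leq h(x)$, but the conjugacy identity combined with $f_Q(h(x))\geq h(x)$ would force $f_Q(h(x)) = h(x)$ for every $x\in[0,1]$, making every point a fixed point of $f_Q$---contradicting that the fixed-point set $\{0,1\}\cup c([Q])$ is nowhere dense. So $h$ is increasing with $h(0)=0$ and $h(1)=1$, and the conjugacy identity forces $h$ to map fixed points to fixed points. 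Using injectivity and symmetry (noting $h^{-1}$ is also a conjugacy), $h$ restricts to an order-preserving bijection $c([P])\to c([Q])$. Setting $h^*(X) = c^{-1}(h(c(X)))$ gives an order isomorphism $[P]\to[Q]$, so $\ot([P])=\ot([Q])$. For computability: given any code of $h$ and any $X\in[P]$, one first computes a code for the real $c(X)$ from $X$, then a code for $h(c(X))$ from the code of $h$, and then recovers $h^*(X)$ by applying $c^{-1}$, which is computable on codes of points of $\mathcal{C}$.

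\textbf{Reverse direction.} Given an order isomorphism $\varphi:[P]\to[Q]$, extend it to an order-preserving bijection $\tilde\varphi:\{0,1\}\cup c([P])\to\{0,1\}\cup c([Q])$ by fixing $0$ and $1$. The complement of $\{0,1\}\cup c([P])$ in $[0,1]$ is a countable disjoint union of maximal open intervals $(a,b)$, each bounded by consecutive fixed points of $f_P$ with $f_P(x)>x$ on the interior; $\tilde\varphi$ pairs each such $(a,b)$ with a corresponding interval $(a',b')=(\tilde\varphi(a),\tilde\varphi(b))$ in the complement for $Q$. On each pair build a conjugacy by the standard dynamical construction: pick $x_0\in(a,b)$ and $y_0\in(a',b')$, choose any homeomorphism $h_0:[x_0,f_P(x_0)]\to[y_0,f_Q(y_0)]$, and extend by declaring $h(f_P^n(x))=f_Q^n(h_0(x))$ for all $n\in\mathbb{Z}$ and $x\in[x_0,f_P(x_0)]$. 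The fundamental domains $[f_P^n(x_0),f_P^{n+1}(x_0)]$ tile $(a,b)$, so this produces a homeomorphism $(a,b)\to(a',b')$ conjugating $f_P$ and $f_Q$. Patching these pieces with $\tilde\varphi$ yields a bijection $h:[0,1]\to[0,1]$ that satisfies $h\circ f_P = f_Q\circ h$ everywhere.

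\textbf{Main obstacle.} The one thing that requires care is continuity of the assembled $h$ at the limit points $c(X)\in c([P])$, where infinitely many complementary intervals $(a_n,b_n)$ may accumulate. Continuity reduces to a squeeze: if $x\in(a_n,b_n)$ then $h(x)\in(\tilde\varphi(a_n),\tilde\varphi(b_n))$, so it suffices that $\tilde\varphi(a_n),\tilde\varphi(b_n)\to c(\varphi(X))$ whenever $a_n,b_n\to c(X)$. This follows because $\tilde\varphi$ is an order-preserving bijection between closed subsets of $[0,1]$ and is therefore a homeomorphism in the subspace topology, so it respects limits. A symmetric argument applied to $h^{-1}$ shows $h$ is a bijection and continuous both ways, completing the construction.
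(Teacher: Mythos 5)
Your proposal is correct and follows essentially the same route as the paper: the forward direction and the "Furthermore" clause via the induced order isomorphism $X\mapsto c^{-1}(h(c(X)))$ on fixed-point sets, and the reverse direction via the fundamental-domain construction $h(f_P^n(x))=f_Q^n(h_0(x))$ on each complementary interval. Your squeeze argument for continuity at accumulation points of fixed points (using that an order-preserving bijection between closed subsets of $[0,1]$ is a homeomorphism) is a more explicit justification of a step the paper passes over quickly, but the overall architecture is the same.
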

\begin{proof}
First note that if $h\circ f_P = f_Q\circ h$, then $h$ is necessarily order preserving.  If $h$ were order reversing, consider any $x$ such that $f_P(x) > x$.  Then $f_P(x) = h^{-1}(f_Q(h(x)))> x$, so $f_Q(h(x))< h(x)$, which is impossible.

  Suppose $h\circ f_P=f_Q\circ h$ as above.  For $X\in[P]$, define
  $h^*(X) = c^{-1}(h(c(X)))$.  Since $c(X)$ is a fixed point of $f_P$,
 $h(c(X))$ is a fixed point of $f_Q$.  Since
  $c(X) \not\in \{0,1\}$, $h(c(X)) \not\in \{0,1\}$, so $h(c(X)) =
  c(Y)$ for some $Y\in[Q]$.  Therefore $h^*$ is well-defined.  By a similar argument, $h^*$ is onto. Because
  $c$ and $h$ are order preserving, so is $h^*$.  Because $c$ and
  $c^{-1}$ are computable, $h^*$ is computable from $h$.

  On the other hand, suppose $h^*:[P] \rightarrow [Q]$ is an order
  isomorphism.  Define $h$ as follows.  For $x \in c([P])$,
  the fixed points of $f_P$, define $h(x) = c(h^*(c^{-1}(x))$, the
  corresponding fixed point of $f_Q$.  Set $h(0)=0,h(1)=1$.  

  Now if $x$ is not a fixed point of $f_P$, $x$ lies in an interval
  $(a,b)$ such that $f_P(a) = a, f_P(b) = b$, and $f_P(z)>z$ for $z\in
  (a,b)$.  Given such an interval $(a,b)$, define $h\upharpoonright
  (a,b)$ as follows.  Fix some $x_0 \in (a,b)$, and some $y_0\in
  (h(a), h(b))$.  For all $z\in (a,b)$, $f_P(z)\in(a,b)$, since $f_P$
  is order preserving and $f_P(b) = b$.  Furthermore,
  $\lim_{n\rightarrow\infty} f_P^n(z) = b$, because this limit must be
  a fixed point of $f_P$ (apply $f_P$ to both sides).  Now since $f_P$
  is a strictly increasing function, its inverse is well-defined, and
  by similar arguments to the previous, $z>f_P^{-1}(z)>\dots
  >f_P^{-n}(z)>\dots$ with $\lim_{n\rightarrow\infty}f_P^{-n}(z) = a$.
  The analogous facts hold for the interval $(h(a), h(b))$ with
  respect to the function $f_Q$.  Let $i:[x_0, f_P(x_0))\rightarrow
    [y_0, f_Q(y_0))$ be any homeomorphism.  For $x\in(a,b)$,
      define $$h(x) = f_Q^n(i(f_P^{-n}(x)))$$ where $n\in\mathbb{Z}$
      is the unique integer such that $x \in [f_P^n(x_0),
        f_P^{n+1}(x_0))$.

  Observe that $h$ maps $[f_P^n(x_0),f_P^{n+1}(x_0))$ homeomorphically
  onto $[f_Q^n(y_0),f_Q^{n+1}(y_0))$.  By the arrangement of these
  intervals, $h$ is continuous at their endpoints, so $h$ maps $(a,b)$
  homeomorphically onto $(h(a), h(b))$.  In the same way we see that
  $h$ is continuous at the fixed points of $f_P$, and so
  $h:[0,1]\rightarrow[0,1]$ is a homeomorphism.

  We claim that $h\circ f_P = f_Q \circ h$.  For the fixed points of
  $f_P$ this is clear.  Given $x$ not a fixed point, let $(a,b)\ni x$
  and $x_0$ be as in the definition of $h$.  Then $$h(f_P(x)) =
  f_Q^n(i(f_p^{-n}(f_P(x)))) = f_Q( f_Q^{n-1}(i(f_P^{-(n-1)}(x)))) =
  f_Q(h(x))$$ where $n\in\mathbb{Z}$ is the unique integer such that
  $f_P(x) \in [f_P^n(x_0), f_P^{n+1}(x_0))$.
\end{proof}

\begin{lem}\label{pqnoncomp} There are two computable trees $P$ and $Q$ such that
  $\ot([P])=\ot([Q])$ but there is no computable $h^*:[P]\rightarrow
  [Q]$ witnessing the isomorphism.
\end{lem}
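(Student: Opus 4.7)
The plan is to exploit the Fact that there is a c.e.\ but not computable set $A$. I would take $P$ to be the ``plain'' computable tree
\[
P = \{0^n : n \geq 0\} \cup \{0^n 1 0^m : n, m \geq 0\},
\]
so that $[P] = \{0^\omega\} \cup \{0^n 1 0^\omega : n \in \Nat\}$. Since $0^n 1 0^\omega > 0^{n'} 1 0^\omega$ whenever $n < n'$, this has order type $1 + \omega^*$, with $0^\omega$ as the minimum and the descending sequence $\ldots < 0^2 1 0^\omega < 0^1 1 0^\omega < 1 0^\omega$ above it.

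The crux is to build a computable $Q$ with $[Q] = \{0^\omega\} \cup \{0^n 1 0^\omega : n \in \Nat \setminus A\}$, of the same order type $1 + \omega^*$. The obstacle is that $\Nat \setminus A$ is only co-c.e., so $Q$ cannot directly encode this membership. I would sidestep this with the standard delay trick: fix a computable monotone approximation $A_0 \subseteq A_1 \subseteq \cdots$ with $A_0 = \emptyset$ and $\bigcup_m A_m = A$, and declare $\sigma \in Q$ iff $\sigma = 0^n$ for some $n \geq 0$, or $\sigma = 0^n 1 0^m$ with $n \notin A_m$. Membership is decidable (compute $A_m$ and check), and monotonicity gives $n \notin A_m \Rightarrow n \notin A_{m-1}$, so $Q$ is closed under initial segments. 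Then $0^n 1 0^\omega \in [Q]$ iff $n \notin A_m$ for all $m$, iff $n \notin A$, while $0^\omega \in [Q]$ trivially. Since a c.e.\ set that is finite or cofinite would be computable, both $A$ and $\Nat \setminus A$ are infinite, so $[Q]$ has order type $1 + \omega^*$ as desired.

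To rule out a computable order isomorphism, let $m_0 < m_1 < m_2 < \cdots$ enumerate $\Nat \setminus A$ in increasing order. The unique order isomorphism $h^* : [P] \to [Q]$ is forced to send $0^\omega \mapsto 0^\omega$ and $0^n 1 0^\omega \mapsto 0^{m_n} 1 0^\omega$. If $h^*$ were computable, then running its oracle machine on the computable input $0^n 1 0^\omega$ and scanning the output for the position of the unique $1$ would yield a computable function $n \mapsto m_n$. But then $A$ becomes computable: to decide $k \in A$, compute $m_0, m_1, \ldots$ until some $m_i \geq k$ and check whether $m_i = k$. This contradicts the choice of $A$. The main point of care is the construction of $Q$; the ``$n \notin A_m$'' rule combined with monotonicity of the approximation is precisely what lets $Q$ be a computable tree whose path set is indexed by a co-c.e.\ set, while everything else is essentially forced by the requirement that both path sets realize $1 + \omega^*$.
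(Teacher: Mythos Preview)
Your proof is correct and takes essentially the same approach as the paper: encode a c.e.\ noncomputable set $A$ into $Q$ via the ``delay'' condition so that $[Q]$ picks out the branches indexed by $\Nat\setminus A$, and argue that the unique order isomorphism computes the complement of $A$. The only cosmetic difference is that the paper swaps the roles of $0$ and $1$ (using strings $1^n0^m$), so its $[P]$ and $[Q]$ have order type $\omega+1$ rather than your $1+\omega^*$; your condition $n\notin A_m$ is exactly the paper's ``$n\neq a_i$ for any $i<m$'' rewritten in terms of the stage-$m$ approximation.
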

\begin{proof}
  Let $P = \{\underbrace{1\dots 1}_n\underbrace{0\dots 0}_m : n,m \in
  \omega\}$.  Then $\ot([P]) = \omega+1$.  Let $A$ be any computably
  enumerable set which is not computable.  Since $A$ is not
  computable, the complement of $A$ is infinite.  Let $a_0,a_1,\dots,a_n,\dots$ be an
  enumeration of the elements of $A$.  Let $Q = \{\underbrace{1\dots
    1}_n\underbrace{0\dots 0}_m : n,m\in \omega, n\neq a_i \text{ for
    any } i<m\}$.  Since $A$ is coinfinite, $\ot([Q]) = \omega +1$ as
  well.  Suppose $h^*:[P]\rightarrow [Q]$ is an order isomorphism.  Then $h^*(\overline{1}) = \overline{1}$ and
  $h^*({\underbrace{1\dots 1}_n}^\smallfrown\overline{0}) =
  {\underbrace{1\dots 1}_m}^\smallfrown\overline{0}$ where $m$ is the
  $n$th element in the complement of $A$.  Thus $h^*$ computes $A$ (to
  find the $n$th element of $\mathbb{N}\setminus A$, compute the bits of
  $h^*({\underbrace{1\dots 1}_n}^\smallfrown\overline{0})$ until the first
  0 appears).  So $h^*$ is not computable.
\end{proof}

\begin{prop}\label{noncomp}
  There exist two computable topologically conjugate functions with no computable conjugacy.
\end{prop}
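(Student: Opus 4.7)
The plan is to assemble the three preceding lemmas directly. The trees $P$ and $Q$ supplied by Lemma \ref{pqnoncomp} are computable and satisfy $\ot([P])=\ot([Q])$, yet admit no computable order isomorphism between their path sets. I would feed each of these into Lemma \ref{functionconstruction} to produce computable functions $f_P$ and $f_Q$ on $[0,1]$ whose nontrivial fixed point sets are exactly $c([P])$ and $c([Q])$.

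Next, I would invoke Lemma \ref{conjugacy}: since $\ot([P]) = \ot([Q])$, the functions $f_P$ and $f_Q$ are topologically conjugate, so a conjugacy $h$ exists. To rule out the existence of a computable conjugacy, I would argue by contradiction. If $h$ were a computable homeomorphism with $h \circ f_P = f_Q \circ h$, then the ``furthermore'' clause of Lemma \ref{conjugacy} gives an order isomorphism $h^*:[P]\to[Q]$ which is computable from $h$. Since computations relativized to a computable oracle remain computable, $h^*$ would itself be computable, contradicting the choice of $P$ and $Q$ from Lemma \ref{pqnoncomp}.

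There is no real obstacle here: the work has been front-loaded into the three preceding lemmas, and the proof of Proposition \ref{noncomp} is essentially just composing them. The only small point worth stating explicitly is that computability of $h$ implies computability of $h^*$ (because $c$ and $c^{-1}$ are computable on the relevant sets, as noted after the definition of $c$). The proposition will therefore be just a few lines long.
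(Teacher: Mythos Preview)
Your proposal is correct and matches the paper's own proof essentially verbatim: take $P,Q$ from Lemma~\ref{pqnoncomp}, apply Lemma~\ref{conjugacy} to see that $f_P$ and $f_Q$ are conjugate, and note that any conjugacy would compute a forbidden order isomorphism $[P]\to[Q]$. The extra remark you make about why computability of $h$ yields computability of $h^*$ is fine but not strictly needed, since this is already part of the statement of Lemma~\ref{conjugacy}.
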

\begin{proof}
  Let $P$ and $Q$ be as in Lemma \ref{pqnoncomp}.  By Lemma
  \ref{conjugacy}, $f_P$ and $f_Q$ are topologically conjugate, but
  any conjugation computes an order
  isomorphism between $[P]$ and $[Q]$, and is thus not
  computable.
\end{proof}

\section{Non-hyperarithmetic conjugation}\label{Nonhyperarithmetic}

We strengthen the result of the previous section by constructing two
topologically conjugate functions with no hyperarithmetic conjugacy.

No background beyond the material in Section \ref{Preliminaries} is
needed to understand most of the material in Section \ref{construction}, which contains the construction.  The exception is Lemma
\ref{pqnonhyp}, which assumes Corollary \ref{isomorders}, familiarity with the Turing jump, and the fact that if $X\in 2^\omega$ is hyperarithmetic, then anything computable from the jump of $X$ is also
hyperarithmetic.
Sections
\ref{hypnotions} and \ref{discussion} assume familiarity with the
hyperarithmetic hierarchy.  For an introduction to hyperarithmetic
theory, we refer the reader to \cite{sacks}.

\subsection{Isomorphic Computable Linear Orders With No Hyperarithmetic Isomorphism}\label{hypnotions}

By $\kO^*$ we mean the set of notations for recursive linear orderings
with no hyperarithmetic descending sequences, which is defined in \cite{sf} and further developed in \cite{harrison}.  It is defined as follows:
\begin{multline*}\kO^* = \cap \{ X : X \in \text{HYP} \wedge 1\in X \wedge (z\in X\rightarrow 2^z\in X) \wedge \\(\forall n[\phi_e(n)\in X \wedge \phi_e(n) \ls \phi_e(n+1)] \rightarrow 3\cdot 5^e \in X)\},\end{multline*}
where $\phi_e$ is the $e$th Turing functional and $\ls$ is the computably enumerable relation satisfying  $1 \ls x$ if $x\neq 1$; $z\ls 2^z$; $\phi_e(n) \ls 3\cdot 5^e$; and $a\ls b \wedge b \ls c \rightarrow a\ls c$.

Harrison \cite{harrison} proved the following structure theorem for $\kO^*\setminus\kO$, the nonstandard ordinal notations:
\begin{thm}[Harrison]
Suppose $a\in \kO^*\setminus \kO$.  Let $1+\eta$ be the order type of the rationals in $[0,1)$. Then there exists a unique $\alpha < \omega_1^{CK}$ such that $\{y : y \ls a\}$ has order type $\omega_1^{CK}\cdot(1 + \eta) + \alpha$.
\end{thm}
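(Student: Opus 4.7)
The plan is to decompose $L := \{y : y \ls a\}$ via an equivalence relation and analyze the resulting quotient, leveraging at the key step the defining property of $\kO^*$ (no hyperarithmetic descending $\ls$-sequence). I would declare $x \equiv y$ iff the closed $\ls$-interval between $x$ and $y$ in $L$ is well-ordered; this is an equivalence relation, with transitivity following because the concatenation of two well-ordered intervals sharing an endpoint is well-ordered. Each equivalence class is convex, so $L/\equiv$ inherits a linear ordering, and its least class is the well-founded initial segment $W := \{y \in L : \{z : z \ls y\} \text{ is well-ordered}\}$, which is a proper initial segment since $a \notin \kO$.

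Next, I would show that every non-final equivalence class $C$ has order type exactly $\omega_1^{CK}$, while the final class (if one exists) has some order type $\alpha < \omega_1^{CK}$. The inequality $\ot(C) \leq \omega_1^{CK}$ is automatic, as $C$ is a well-ordered subset of a linear ordering computable in $a$. Suppose $\ot(C) < \omega_1^{CK}$ and $C$ is not the final class, so the set $U_C$ of elements of $L$ strictly above all of $C$ is nonempty. Then $U_C$ has no $\ls$-minimum: a putative minimum $u$ would make $[\min C, u]$ equal $C \cup \{u\}$ (nothing strictly between), hence a well-ordered interval of type $\ot(C) + 1$, forcing $\min C \equiv u$ and so $u \in C$, contradicting $u \in U_C$. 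Since $C$ is $\Pi^1_1$ and well-ordered of type below $\omega_1^{CK}$, a standard result gives that $C$, and hence $U_C$, is hyperarithmetic; iterating ``pick any strictly smaller element of $U_C$'' then produces a hyperarithmetic descending $\ls$-sequence in $U_C \subseteq \kO^*$, contradicting the defining property of $\kO^*$. Applied to $C := W$, the same argument establishes $\ot(W) = \omega_1^{CK}$.

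The main obstacle, in my view, is density: no two classes of $L/\equiv$ can be consecutive, apart from a possible final class. This step happily does not require $\kO^*$ directly. Suppose $[x] < [y]$ are consecutive with $[x]$ non-final (hence of type $\omega_1^{CK}$ and without maximum), and let $u := \min [y]$. Since no class lies strictly between $[x]$ and $[y]$, the interval $[\min [x], u]$ in $L$ equals $[x] \cup \{u\}$, which is well-ordered (any descending sequence from the top $u$ enters the well-ordered $[x]$ and terminates). Hence $\min [x] \equiv u$, so $u \in [x]$, contradicting $u \in [y]$. The same argument with $[x] := W$ shows $W$ has no immediate successor class, so the classes of $L/\equiv$ above $W$ have no least element.

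Combining these, $L/\equiv$ has order type $1 + \eta$ or $1 + \eta + 1$, with first class $W$ of type $\omega_1^{CK}$, $\eta$-many intermediate classes each of type $\omega_1^{CK}$, and possibly a final class of type $\alpha < \omega_1^{CK}$. Therefore $\ot(L) = \omega_1^{CK} + \omega_1^{CK}\cdot \eta + \alpha = \omega_1^{CK}\cdot(1+\eta) + \alpha$, with $\alpha = 0$ when no final class exists. Uniqueness of $\alpha$ is immediate: $\alpha$ is identified with the order type of the maximal well-ordered final segment of $L$, a quantity determined by $L$ alone.
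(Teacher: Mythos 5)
The paper does not prove this theorem---it is quoted from Harrison---so I am judging your proposal on its own merits. Your overall architecture (the convexity equivalence ``the interval $[x,y]$ is well-ordered,'' the identification of the first class with the well-founded part, boundedness to show bounded $\Pi^1_1$ classes are hyperarithmetic, and the density argument) is the standard route, and the $W$ case and the uniqueness of $\alpha$ are handled correctly. But there is a genuine gap at the point where you declare $\ot(C)\leq\omega_1^{CK}$ to be ``automatic, as $C$ is a well-ordered subset.'' Nothing in the definition of $\equiv$ makes a class well-ordered: in the orders $\mathbb{Z}$ or $\omega+\omega^*$ every closed interval is finite, so an entire copy of $\omega^*$ sits inside a single class. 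What is automatic is only that $C\cap\{z: z\geq x_0\}$ is well-ordered for each $x_0\in C$ (it is an increasing union of well-ordered initial segments $[x_0,y]$); the coinitial part $C\cap\{z:z<x_0\}$ could a priori be ill-founded, i.e.\ $C$ could fail to have a least element. Ruling this out is precisely where the no-hyperarithmetic-descending-sequence hypothesis must be used again (e.g.\ by showing that if $C$ had no least element, the union of the classes below $C$ would be $\Delta^1_1$, so that its complement, a final segment with no least element, would yield a hyperarithmetic descending sequence---and making that set $\Delta^1_1$ itself requires a boundedness argument on the interval types $\ot([z,x_0])$ for $z\in C$). Your proof never addresses this, and the omission propagates: the density step begins ``let $u:=\min[y]$,'' and the claim that the final class has type strictly below $\omega_1^{CK}$ (rather than possibly equal to it, which would change the answer to $\omega_1^{CK}\cdot(1+\eta+1)$) also rests on the final class having a least element, whence it is a computable well-order.

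A secondary error in the same sentence: even granting well-orderedness, ``a well-ordered subset of a computable linear order has type $\leq\omega_1^{CK}$'' is false. The Harrison order $\omega_1^{CK}\cdot(1+\eta)$ is itself computable and contains well-ordered subsets of type $\omega_1^{CK}\cdot\beta$ for every countable $\beta$ (take the well-founded parts of $\beta$ many blocks). The bound you want uses convexity: a well-ordered \emph{convex} class has a least element $m$ and is the increasing union, as initial segments, of the closed intervals $[m,y]$, each of which is a computable well-order and hence of type below $\omega_1^{CK}$. So the correct order of argument is: first prove each class has a least element (using the $\kO^*$ hypothesis), then deduce well-orderedness and the bound $\leq\omega_1^{CK}$ from convexity, and only then run your boundedness and density arguments, which are otherwise sound.
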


Our use of this structure theorem is limited to the following corollary:

\begin{cor}\label{isomorders}
  There exists a pair of isomorphic computable linear orderings such
  that no isomorphism between them is hyperarithmetic.
\end{cor}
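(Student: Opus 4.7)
The plan is to apply Harrison's structure theorem to select two distinct notations $a, b \in \kO^* \setminus \kO$ whose initial segments $L_a = (\{y : y \ls a\}, \ls)$ and $L_b = (\{y : y \ls b\}, \ls)$ have the same order type $\omega_1^{CK}(1+\eta) + \alpha$ for some fixed $\alpha < \omega_1^{CK}$ (concretely $\alpha = 0$). Each such $L_a$ is a computable linear ordering (passing, if necessary, to a computable copy obtained by enumerating the $\ls$-predecessors of $a$ in order), and by Harrison's theorem $L_a \cong L_b$. The real content of the corollary is therefore the existence of notations $a,b$ such that no isomorphism between $L_a$ and $L_b$ is hyperarithmetic.

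To achieve the non-existence I would argue by contradiction. Suppose $h : L_a \to L_b$ is hyperarithmetic. Any such $h$ must send the well-founded initial segment of $L_a$ (of order type $\omega_1^{CK}$) bijectively onto that of $L_b$, preserving ordinal rank. Since $a, b \in \kO^*$, neither $L_a$ nor $L_b$ admits a hyperarithmetic descending sequence, so the goal would be to use $h$, together with the $\Pi^1_1$ definability of $\kO$ and the c.e.\ structure of $\ls$, to manufacture from $h$ either a hyperarithmetic descending sequence in $L_a$ or $L_b$ (contradicting membership in $\kO^*$), or a hyperarithmetic presentation of a well-order of type $\omega_1^{CK}$ (contradicting the definition of $\omega_1^{CK}$).

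The selection of $a$ and $b$ would exploit the richness of $\kO^* \setminus \kO$. By Harrison's theorem there are uncountably many notations with Harrison order type $\omega_1^{CK}(1+\eta) + \alpha$, while only countably many hyperarithmetic functions are available as isomorphism candidates. A basis-theorem or diagonal argument inside the $\Pi^1_1$ set $\kO^*$ should produce a pair $a,b$ such that no hyperarithmetic $h$ maps $L_a$ isomorphically onto $L_b$; alternatively, one can stage a direct diagonalization that builds $a$ and $b$ as notations while defeating, at stage $e$, the $e$th hyperarithmetic function.

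The main obstacle I foresee is turning the informal contradiction into a formal one: the well-founded part of a Harrison order is $\Pi^1_1$-complete, not hyperarithmetic, so one cannot simply \emph{restrict} a hyperarithmetic $h$ to it. The workaround would be to use hyperarithmetic approximations to the well-founded part (for example, the set of elements of rank below a given hyperarithmetic ordinal, enumerated via $\kO$), transfer these approximations through $h$, and then derive a descending sequence or a hyperarithmetic copy of $\omega_1^{CK}$ from the failure of $h$ to respect some approximation in the limit. Because the author's note indicates the corollary is folklore, I would follow the blueprint of \cite{sf} and \cite{harrison} for the precise execution.
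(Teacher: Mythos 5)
There is a genuine gap: you correctly identify the right objects (initial segments of Harrison orders, i.e.\ notations in $\kO^*\setminus\kO$) and the right target contradiction (a hyperarithmetic descending sequence), but you never supply a working mechanism to extract that descending sequence from a hyperarithmetic isomorphism. The paper's mechanism is a specific choice you missed: take $a\in\kO^*\setminus\kO$ with $\ot(\{y: y\ls a\})=\omega_1^{CK}\cdot(1+\eta)$ and then take $b\ls a$ \emph{below} $a$ with standard part also $0$, so that $\{y: y\ls b\}$ is a \emph{proper initial segment} of $\{y: y\ls a\}$ of the same order type. Any isomorphism $i$ from the whole order onto that proper initial segment, composed with the inclusion, is a self-map moving $a$ strictly downward, so $a$, $i(a)$, $i^2(a),\dots$ is a descending sequence, and it is hyperarithmetic whenever $i$ is --- contradicting $a\in\kO^*$. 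This completely sidesteps the obstacle you (rightly) worry about, namely that the well-founded part is $\Pi^1_1$-complete and cannot be manipulated hyperarithmetically; no analysis of the well-founded part is needed at all.

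Your two proposed alternative routes do not close the gap. The counting argument fails outright: notations are natural numbers, so $\kO^*\setminus\kO$ is countable, and there is no cardinality mismatch with the countably many hyperarithmetic functions (your claim of ``uncountably many notations'' is false). The direct diagonalization ``defeating the $e$th hyperarithmetic function at stage $e$'' is also not available as stated, because the hyperarithmetic functions are not effectively listable at any hyperarithmetic level, so a computable (or even hyperarithmetic) construction cannot enumerate its opponents; this is precisely why one needs the structural trick above rather than a requirement-by-requirement construction. To repair the proof, replace your second and third paragraphs with the initial-segment/self-embedding argument.
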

\begin{proof}
Let $a\in \kO^* \setminus \kO$ with $\ot(\{ y : y\ls a\}) = \omega_1^{CK}\cdot(1+\eta)$.  Then there is
another nonstandard notation $b\ls a$ whose standard part is also $0$.  So $\ot(\{y : y\ls a \}) =\ot(\{y: y\ls b\})$.  For
contradiction, suppose that $i:\{y : y\ls a\}\rightarrow \{y: y\ls b\}$ is a hyperarithmetic
isomorphism.  Then $\{i^n(a)\}_n$ is a hyperarithmetic descending
sequence in $\{y : y\ls a\}$, contradicting that $a\in \kO^*$.
\end{proof}

\subsection{Construction of Topologically Conjugate Functions}\label{construction}
We encode arbitrary computable linear orders into the path sets of computable trees, guaranteeing the path sets are order isomorphic if and only if the orders were. 
\begin{lem}\label{ordertree}
  Uniformly in any linear ordering $R=(A,<_R)$, one may
  compute a tree $T_R\subseteq 2^{<\omega}$ and a labeling function
  $l_R:A\rightarrow T_R$ such that
\begin{enumerate}
\item For each $\sigma\in T$, the last bit of $\sigma$ is 0 if and only if $\sigma = l_R(a)$ for some $a\in A$.
\item For each $a
  \in A$, $l_R(a)^\smallfrown\overline{1}\in [T_R]$
\item For each $a,b\in A$, $a<_R b$ if and only if $l_R(a)^\smallfrown\overline{1}<l_R(b)^\smallfrown\overline{1}$.
\item\label{4} If $X\in [T_R]$, then $X = l_R(a)^\smallfrown\overline{1}$ for
  some $a\in A$ if and only if $X$ has a successor in $[T_R]$.
\item\label{5} The order type of $[T_R]$ depends only on the order type of $R$.
\item\label{6} Two linear orders $R$ and $S$ are isomorphic if and only if the associated $[T_R]$ and $[T_S]$ are isomorphic.
\end{enumerate}
\end{lem}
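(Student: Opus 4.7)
My plan is to build $l_R$ and $T_R$ in stages by computably enumerating $A = \{a_0, a_1, \dots\}$. At stage $n$ I determine the $<_R$-position of $a_n$ among $\{a_0, \dots, a_{n-1}\}$ and assign a label $l_R(a_n) \in 2^{<\omega}$ ending in $0$, chosen so that the lex order of $\{l_R(a_i)^\smallfrown\overline{1} : i \le n\}$ matches the $<_R$ order on $\{a_0,\dots,a_n\}$. An insertion scheme supplies the label: if $a_n$ is a new $<_R$-extremum, $l_R(a_n)$ is placed lex-outside the previously assigned labels (for instance $0^N$ for a new minimum, or a bit-flipped analogue of the current maximal label for a new maximum); if $a_n$ lies strictly between $a_j$ and $a_k$ with longest common prefix $\rho$ of $l_R(a_j)$ and $l_R(a_k)$, I take $l_R(a_n) = \rho^\smallfrown 1^\smallfrown 0^N$. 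The padding parameter $N$ is chosen very large at each stage, so that later insertions between $l_R(a_n)^\smallfrown\overline{1}$ and any committed neighbour have room to be accommodated without disturbing what is already fixed. Conditions (1), (2), and (3) then follow directly from the construction.

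The tree $T_R$ is the union of all prefixes of $l_R(a)^\smallfrown\overline{1}$ for $a \in A$, together with the auxiliary strings $1^k$ for every $k \ge 1$ (which end in $1$ and so are consistent with (1)). Adding the $1^k$'s places the path $\overline{1}$ into $[T_R]$; this serves as the immediate $[T_R]$-successor for whichever label turns out to be lex-maximal, which is the one case in which no other labeled or limit path could lie above. All other non-labeled paths arise as lex-limits of infinite sequences of labels (approached from above) and are automatically included via closure to $[T_R]$; critically, because labels end in $0$ and the only extra strings placed in $T_R$ are of the form $1^k$, no unintended $0$-ending string enters $T_R$, so condition (1) is preserved throughout.

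The main obstacle is condition (4). For the forward direction, every labeled path $l_R(a)^\smallfrown\overline{1}$ has for its immediate $[T_R]$-successor either another labeled path $l_R(a')^\smallfrown\overline{1}$ (if a $<_R$-successor $a'$ of $a$ is eventually enumerated), the path $\overline{1}$ (if $a$ turns out to be the $<_R$-maximum of $A$), or a non-labeled lex-limit from above of later-inserted labels (if $a$ has no $<_R$-successor in $A$ but infinitely many $<_R$-larger elements); the length-budget $N$ at each stage guarantees that the relevant ``immediate successor'' relation is preserved under future insertions. For the converse, every non-labeled path is either a lex-limit from above of labels or the path $\overline{1}$ (itself the limit of labels whose terminal $0$ drifts rightward), so has no immediate successor in $[T_R]$. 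Once (4) is established, (5) and (6) follow formally: (4) makes ``labeled'' an order-definable subset of $[T_R]$, so any isomorphism $[T_R] \to [T_S]$ restricts to a bijection of labels, which by (3) is an isomorphism $R \to S$; conversely, an isomorphism $R \to S$ lifts canonically across the insertion-defined structure to an isomorphism $[T_R] \to [T_S]$, yielding (5) and completing (6).
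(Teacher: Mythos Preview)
Your overall plan---enumerate $A$, insert labels one at a time so that the lex order on $\{l_R(a_i)^\smallfrown\overline{1}\}$ tracks $<_R$, and let $T_R$ be the downward closure---is the same as the paper's, but your specific labeling scheme breaks condition~(1). A label such as $l_R(a_n)=0^N$ for a new minimum, or $\rho^\smallfrown 1^\smallfrown 0^N$ for an insertion, has many proper initial segments ending in $0$: every $0^k$ with $1\le k<N$, or every $\rho^\smallfrown 1^\smallfrown 0^k$ with $1\le k<N$, lies in $T_R$ (being a prefix of the new label), ends in $0$, yet is not $l_R(a)$ for any $a$. So your assertion that ``no unintended $0$-ending string enters $T_R$'' is false; the unintended strings come from closing under initial segments, not from the auxiliary $1^k$'s. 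Condition~(1) is precisely what makes the decoding in Lemma~\ref{pqnonhyp} work (the last $0$ in $h(l_R(a)^\smallfrown\overline{1})$ must pin down a label), so it cannot simply be weakened.

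The paper avoids this by a different placement rule: at each stage every current leaf is extended by $1$, and a newly enumerated $a$ is labeled by branching with a \emph{single} $0$ off the path $l_R(b)^\smallfrown\overline{1}$, where $b$ is the least previously seen element with $b>_R a$ (or off $\overline{1}$ if $a$ is a new maximum). Concretely $l_R(a)=l_R(b)^\smallfrown 1^{k}{}^\smallfrown 0$ for the appropriate $k$. Then every proper prefix of $l_R(a)$ ending in $0$ is automatically an earlier label, so condition~(1) holds by construction. Your scheme can be salvaged by padding with $1$'s before the terminal $0$ rather than with $0$'s after a single $1$; once you do that, and observe that ``new minimum'' is just the insertion case with $b$ the current least element, you have essentially recovered the paper's rule. (Separately, the ``bit-flipped analogue'' for a new maximum and the claim that an isomorphism $R\to S$ ``lifts canonically'' to $[T_R]\to[T_S]$ for~(5) are left too vague to assess; the paper handles~(5) by explicitly identifying the order type of $[T_R]$ with that of $A$ together with its upward-closed subsets lacking a least element.)
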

\begin{proof}
Construct $T_R$ and $l_R$ in stages as follows.  At stage $n$ we
decide which strings $\sigma$ of length $n$ belong to $T_R$, and define $l_R(a)$ for all elements $a$ which have been seen to be in $A$ by stage $n$.

 At stage $0$ put the empty string in $T_R$.  At stage $n+1$, for each
 $\tau\in T_R$ such that $|\tau| = n$, put $\tau^\smallfrown 1$ into
 $T_R$.  If no new element of $A$ has been enumerated in this time,
 the stage is completed.  Otherwise, there are two possibilities.  If
 the new element $a$ is $R$-bigger than any element that has been
 enumerated previously, put $\sigma = {\underbrace{1\dots
     1}_n}^\smallfrown 0$ in $T_R$ and let $l_R(a) = \sigma$.  On the
 other hand, if $b$ is least among the previously enumerated elements
 of $A$ such that $b>_Ra$, then already we have $\tau =
 l_R(b)^\smallfrown\underbrace{1\dots 1}_{n-|l_R(b)|} \in T_R$.  Put
 $\tau^\smallfrown 0$ in $T_R$ and define $l_R(a) = \tau^\smallfrown
 0$.  This completes stage $n+1$.  The tree $T_R$ and the function
 $l_R$ have now been defined.

The first three parts of the lemma follow directly from the construction.
For part \ref{4}, suppose that $X = l_R(a)^\smallfrown\overline{1}$.
Then $X$ has a successor in $[T_R]$: the leftmost path of $T_R$ that
begins with $(l_R(a)\upharpoonright (|l_R(a)|-1))^\smallfrown 1$.  On
the other hand, suppose that $X$ does not take this form.  Then either
$X = \overline{1}$, in which case it does not have a successor, or $X$
has infinitely many zeros, in which case it has infinitely many
labeled substrings $l_R(a_1)\subset l_R(a_2) \subset \dots \subset X$.
Then we have $l_R(a_1)^\smallfrown\overline{1} >
l_R(a_2)^\smallfrown\overline{1} > \dots$ with
$\lim_{n\rightarrow\infty}l_R(a_n)^\smallfrown\overline{1} = X$, so $X$ has no successor in
$[T_R]$.

The order type of $[T_R]$ can be described as follows.  Let
$\mathcal{U}$ be the set of all upward closed subsets of $A$ that have
no least element.  We define an ordering $<^*$ on $A\cup \mathcal{U}$
which extends $<_R$.  If $U\in \mathcal{U}$ and $a\in A$, say that $a
<^* U$ if and only if $a\not\in U$.  If $U,V\in
\mathcal{U}$, say that $U <^* V$ if and only if $U\setminus V$ is nonempty.  Then consider the order $\langle A \cup
\mathcal{U}, <^* \rangle$.  This new order has an order type which
depends only on the order type of $\langle A, <_R\rangle$.
Furthermore, this is the same order type that the tree $[T_R]$ has,
via the order preserving bijection $a\mapsto
l_R(a)^\smallfrown\overline{1}$, $U\mapsto \inf \left(\{\overline{1}
\} \cup \{l_R(a)^\smallfrown\overline{1} : a \in U\}\right)$.

Finally, an isomorphic copy of $R$ may be recovered from $[T_R]$ by restricting the domain of the latter to $\{X : X \text{ has a successor in } [T_R]\}$.  Thus if $S$ and $R$ are linear orders, $[T_R]$ and $[T_S]$ are isomorphic if and only if $R$ and $S$ are.
\end{proof}

Next we will see that from a isomorphism between path sets of such trees, an isomorphism between the original orders may be obtained (using one Turing jump).  Thus, the pair of isomorphic linear orders from Corollary \ref{isomorders} generates a pair of trees whose isomorphic path sets have no hyperarithmetic isomorphism.

\begin{lem}\label{pqnonhyp}
There exist computable trees $P,Q\subseteq 2^{<\omega}$ such that $\ot([P]) = \ot([Q])$ but there is no hyperarithmetic $h:[P]\rightarrow[Q]$ which
  witnesses the isomorphism.
\end{lem}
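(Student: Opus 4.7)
The plan is to combine Corollary \ref{isomorders} with the tree construction of Lemma \ref{ordertree}. I would let $R = (A,<_R)$ and $S = (B,<_S)$ be the pair of isomorphic computable linear orders supplied by Corollary \ref{isomorders}, for which no isomorphism is hyperarithmetic, and set $P = T_R$ and $Q = T_S$. Both trees are computable by Lemma \ref{ordertree}, and part \ref{5} of that lemma gives $\ot([P]) = \ot([Q])$ immediately from the fact that $R$ and $S$ are isomorphic.

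The substantive step is to show that a hyperarithmetic order isomorphism $h:[P]\to[Q]$ would produce a hyperarithmetic isomorphism $R\to S$, yielding the desired contradiction. For each $a\in A$ one uniformly computes the path $X_a = l_R(a)^\smallfrown\overline{1}\in[P]$, and then $h(X_a)$ is hyperarithmetic. By part \ref{4} of Lemma \ref{ordertree}, $X_a$ has a successor in $[P]$, so $h(X_a)$ has a successor in $[Q]$ (since having a successor is an order-theoretic property preserved by any order isomorphism), and therefore must equal $l_S(\varphi(a))^\smallfrown\overline{1}$ for a unique $\varphi(a)\in B$. By part \ref{3}, the assignment $a\mapsto \varphi(a)$ is an isomorphism $R\to S$, and by part \ref{6} it is necessarily a bijection.

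The main obstacle is ensuring that the extraction of $\varphi(a)$ from $h(X_a)$ stays within the hyperarithmetic class. The predicate ``$Y = l_S(b)^\smallfrown\overline{1}$ for some $b\in B$'' unwinds as: there exists $n$ such that $Y\uhr n \in Q$, $(Y\uhr n)(n-1)=0$, and $Y(m)=1$ for every $m\geq n$. This is $\Sigma^0_2$ in $Y$ relative to the computable tree $Q$, so locating the witnessing $n$ and then reading off the unique $b$ with $l_S(b) = Y\uhr n$ can be carried out computably in the Turing jump of $h$. Since $h$ is assumed hyperarithmetic, so is $h'$, and any set computable from $h'$ is hyperarithmetic by the fact cited in the introduction to this section. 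Hence $\varphi$ is hyperarithmetic, contradicting Corollary \ref{isomorders} and completing the proof.
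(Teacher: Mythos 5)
Your proposal is correct and follows essentially the same route as the paper: pass the orders from Corollary \ref{isomorders} through Lemma \ref{ordertree}, use the successor characterization (part \ref{4}) to see that $h$ carries labeled paths to labeled paths, and recover the label $\varphi(a)$ from $h(l_R(a)^\smallfrown\overline{1})$ using the jump of $h$ (your $\Sigma^0_2$ unwinding is exactly the paper's ``ask the $h$-jump whether another zero will appear''), then invoke the closure of HYP under the jump.
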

\begin{proof}
  Let $R= (A_R,<_R)$ and $S=(A_S,<_S)$ be two
  isomorphic computable linear orderings such that no isomorphism
  between them is hyperarithmetic.  Compute $l_R$, $l_S$, $T_R$ and $T_S$ as in Lemma
  \ref{ordertree}.  Let $h:[T_R]\rightarrow[T_S]$ be an
  order isomorphism.  We claim that there is an order isomorphism
  $h^*:A_R\rightarrow A_S$ which is computable in the jump of $h$.
By considering only those elements of $[T_R]$ and $[T_S]$ which have successors,
we see that the restriction $h:\{l_R(a)^\smallfrown\overline{1} : a \in A_R\} \rightarrow \{l_S(b)^\smallfrown\overline{1} : b \in A_S\}$ is an order isomorphism.    Given $a \in A_R$, compute $h^*(a)$ as follows.   Consider the
  enumeration, computable in $h$, of the bits of
  $h(l_R(a)^\smallfrown\overline{1})$.  Each time a
  zero appears in that enumeration, ask the $h$-jump oracle if there will be
  another zero.  Eventually the last zero will be found.  At that
  point $\sigma$ has been enumerated with $\sigma(|\sigma|) = 0$ and
  $\sigma = l_S(b)$ for some $b \in A_S$.  Return this $b$.  (Since $l_S$ is recursive,
  the search for $b$ such that $l_S(b) = \sigma$ is guaranteed to
  terminate.)  Because $a\mapsto
  l_R(a)^\smallfrown\overline{1}$, $h\upharpoonright
  \{l_R(a)^\smallfrown\overline{1} : a \in A_R\}$, and $b\mapsto
  l_S(b)^\smallfrown\overline{1}$ are each order isomorphisms, $h^*$ is an order isomorphism.  Since
  $h^*$ is not hyperarithmetic, $h$ is not
  hyperarithmetic either.
\end{proof}

\begin{prop}\label{nonhyp}
  There exist two computable topologically conjugate functions with no hyperarithmetic conjugacy.
\end{prop}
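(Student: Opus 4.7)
The plan is to mirror the proof of Proposition \ref{noncomp}, substituting the hyperarithmetic separation from Lemma \ref{pqnonhyp} for the computable one used there. First I would invoke Lemma \ref{pqnonhyp} to obtain computable trees $P$ and $Q$ with $\ot([P])=\ot([Q])$ but no hyperarithmetic order isomorphism $h^*:[P]\rightarrow[Q]$. Applying Lemma \ref{functionconstruction} to each produces computable functions $f_P, f_Q:[0,1]\rightarrow[0,1]$, and since their fixed-point sets have the same order type, Lemma \ref{conjugacy} delivers that $f_P$ and $f_Q$ are topologically conjugate.

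Next I would argue by contradiction. Suppose $h$ is a hyperarithmetic conjugacy of $f_P$ and $f_Q$. By the ``furthermore'' clause of Lemma \ref{conjugacy}, $h$ computes an order isomorphism $h^*:[P]\rightarrow[Q]$. The hyperarithmetic sets are closed under Turing reducibility (in particular, anything computable from a hyperarithmetic set is hyperarithmetic), so $h^*$ is itself hyperarithmetic, contradicting the choice of $P$ and $Q$ from Lemma \ref{pqnonhyp}.

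There is no real obstacle in this final step: all of the substantive content has been unloaded into the preceding lemmas. The genuinely difficult work is in Lemma \ref{pqnonhyp}, which rests on Corollary \ref{isomorders} (extracted from Harrison's structure theorem for $\kO^*\setminus \kO$) together with the tree encoding of Lemma \ref{ordertree}. Once those are in place, the proposition is just the reassembly already performed in Proposition \ref{noncomp}, with ``computable'' uniformly replaced by ``hyperarithmetic.''
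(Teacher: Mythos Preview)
Your proposal is correct and matches the paper's own proof essentially line for line: invoke Lemma \ref{pqnonhyp} for $P$ and $Q$, apply Lemma \ref{conjugacy} to get that $f_P$ and $f_Q$ are conjugate, and use the ``furthermore'' clause together with closure of the hyperarithmetic sets under Turing reducibility to rule out any hyperarithmetic conjugacy. The only difference is that you spell out the contradiction and the role of Lemma \ref{functionconstruction} explicitly, whereas the paper compresses this into a single sentence.
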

\begin{proof}
  Let $P$ and $Q$ be as in Lemma \ref{pqnonhyp}.  By Lemma
  \ref{conjugacy}, $f_P$ and $f_Q$ are topologically conjugate, but
  any conjugation computes an order
  isomorphism between $[P]$ and $[Q]$, and is thus not
  hyperarithmetic.
\end{proof}

\subsection{Discussion}\label{discussion}

The above construction reduces pairs of linear orders to pairs of
$C[0,1]$ functions such that the functions are topologically conjugate if and
only if the orders were isomorphic.  

\begin{cor}
The set of all (pairs of indices for) computable topologically conjugate pairs of functions is $\Sigma_1^1$-complete.
\end{cor}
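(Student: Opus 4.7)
My plan is to prove the two directions separately: membership in $\Sigma^1_1$ by a straightforward quantifier count on the defining property, and $\Sigma^1_1$-hardness by reducing from the classical $\Sigma^1_1$-complete problem of isomorphism of computable linear orderings.

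For the upper bound, I would observe that two computable functions $f, g$ are topologically conjugate iff there exists $X \in 2^\omega$ such that (i) $X$ is a valid code for some continuous function $h:[0,1]\to[0,1]$ in the sense of Section \ref{real-valued functions}, (ii) the function $h$ coded by $X$ is a homeomorphism of $[0,1]$, and (iii) $f\circ h = h\circ g$. Condition (i) is arithmetic in $X$. For (ii), since $h$ is a continuous map $[0,1]\to[0,1]$, it is a homeomorphism iff it is strictly monotone with $\{h(0),h(1)\} = \{0,1\}$; this can be checked arithmetically by quantifying over pairs of rationals $p < q$ in $[0,1]$ and comparing $h(p)$ with $h(q)$ using the rational approximations supplied by $X$. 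Condition (iii) reduces by continuity to the arithmetic assertion that $f(h(q)) = h(g(q))$ for every rational $q \in [0,1]$. Thus topological conjugacy has the form $\exists X \in 2^\omega\, \varphi(X,f,g)$ with $\varphi$ arithmetic, giving $\Sigma^1_1$.

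For $\Sigma^1_1$-hardness I would invoke the classical result (due to essentially Kleene, and well-known in computable structure theory) that the set of pairs of indices for isomorphic computable linear orderings is $\Sigma^1_1$-complete. Given indices for computable linear orders $R$ and $S$, Lemma \ref{ordertree} produces uniformly computable trees $T_R, T_S$, and Lemma \ref{functionconstruction} then produces uniformly computable functions $f_{T_R}, f_{T_S}$. By parts (5) and (6) of Lemma \ref{ordertree}, $\ot([T_R]) = \ot([T_S])$ iff $R \cong S$; combining this with Lemma \ref{conjugacy}, we conclude $f_{T_R}$ and $f_{T_S}$ are topologically conjugate iff $R \cong S$. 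So the map $(R,S) \mapsto (f_{T_R}, f_{T_S})$ is a computable many-one reduction from linear-order isomorphism to topological conjugacy of computable functions, establishing hardness.

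The main obstacle is really just bookkeeping: verifying that the uniformity claims in Lemmas \ref{ordertree} and \ref{functionconstruction} are strong enough that the map on indices $(e_R, e_S) \mapsto (\text{index of }f_{T_R}, \text{index of }f_{T_S})$ is total computable, and confirming that condition (ii) above really is arithmetic rather than requiring a further quantifier over codes. Both are routine once one tracks definitions carefully, so no substantive new argument beyond the lemmas already in hand is needed.
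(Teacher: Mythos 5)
Your proof is correct and follows essentially the same route as the paper: the upper bound comes from expressing conjugacy as an existential quantifier over codes for $h$ with an arithmetic matrix (quantifying over rationals by continuity), and hardness comes from composing Lemmas \ref{ordertree}, \ref{functionconstruction}, and \ref{conjugacy} to reduce the $\Sigma_1^1$-complete isomorphism problem for computable linear orders. Your additional detail on why ``$h$ is a homeomorphism'' is arithmetic (strict monotonicity checked on rational pairs, endpoints mapped to $\{0,1\}$) is a correct elaboration of a point the paper leaves implicit.
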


\begin{proof}
Due to the continuity of all functions involved, $f$ and $g$ are
topologically conjugate if and only if \begin{equation}\label{sigma11}\exists h \left[h \text{ is a
    homeomorphism of } I \text{ and } (\forall x \in \mathbb{Q}\cap
  I)[f(h(x)) = h(g(x))]\right].\end{equation} Note that the matrix is arithmetic.  Therefore, the statement ``$f$ and $g$ are topologically conjugate'' is $\Sigma_1^1$.

On the other hand, it is known that the isomorphism problem for computable linear orders is $\Sigma_1^1$-complete.   Proofs are given in, for example, \cite[Theorem 4.4(d)]{gk} and \cite[Lemma 5.2]{cdh}.  We have demonstrated a computable reduction from the isomorphism problem for linear orders to the conjugacy problem for functions on the interval (Lemmas \ref{ordertree} and \ref{functionconstruction}).  Therefore the conjugacy problem is $\Sigma_1^1$-complete.
\end{proof}

Furthermore, by (\ref{sigma11}) the set of all $h$ such that $h$ is a
conjugation of $f$ and $g$ is $\Delta_1^1$ relative to $f$ and $g$.
Therefore, assuming $f$ and $g$ are computable (or even
hyperarithmetic) and topologically conjugate, they must have an
$\kO$-computable conjugacy.  In fact, the Gandy basis theorem
guarantees the existence of a hyperarithmetically low conjugacy.
Therefore, the result in Proposition \ref{nonhyp} is as strong as
possible.

\bibliographystyle{amsalpha}
\bibliography{tcbib}{}

\end{document}